\newtheorem{theorem}{Theorem}
\newtheorem{corollary}{Corollary}
\newtheorem{definition}{Definition}
\newenvironment{example}
{\smallskip\noindent{\bf Example\/}.}{\smallskip\par}
\newenvironment{proof}{\begin{ProofwCaption}{Proof}}{\end{ProofwCaption}}
\newenvironment{proof*}[1]{\begin{ProofwCaption}{{#1}}}{\end{ProofwCaption}}
\newenvironment{ProofwCaption}[1]%
  {\addvspace\theorempreskipamount \noindent{\it #1.}\rm}%
  {\qed \par \addvspace\theorempostskipamount}
\newcommand{\qedsymbol}{{\rm $\Box$}}
\newcommand{\qed}{\hfill\qedsymbol}
\newcommand{\CC}{{\mathbb C}}
\newcommand{\QQ}{{\mathbb Q}}
\newcommand{\RR}{{\mathbb R}}
\newcommand{\ZZ}{{\mathbb Z}}
\newcommand{\LLL}{{\mathbb L}}
\newcommand{\be}{{\mathbf{e}}}
\newcommand{\calA}{{\cal A}}
\newcommand{\Var}{{\rm{Var}_{\mathbb{C}}}}
\title{Higher order spectra, equivariant Hodge-Deligne polynomials and Macdonald type equations}
\author{Wolfgang Ebeling and Sabir M.~Gusein-Zade
\thanks{Partially supported by DFG (Mercator fellowship, Eb 102/8-1), RFBR-13-01-00755
and NSh-5138.2014.1.
Keywords: group actions, spectrum, orbifold Euler characteristic, Macdonald type equations.
AMS 2010 Math. Subject Classification: 14L30, 55M35, 57R18.
}
}
\date{}
\begin{document}
\selectlanguage{english}

\maketitle

\begin{center}
{\it Dedicated to Gert-Martin Greuel on the occasion of his 70th birthday}
\end{center}

\begin{abstract}
We define notions of higher order spectra of a complex quasi-projective manifold with an action
of a finite group $G$ and with a $G$-equivariant automorphism of finite order,
some of their refinements and give Macdonald type equations for them.
\end{abstract}

%%%%%%%%%%%%%%%%%%%%%%%
\section*{Introduction}
%%%%%%%%%%%%%%%%%%%%%%%
The (Hodge) spectrum was first defined in \cite{St1, Varch} for a germ of a holomorphic function on $(\CC^n,0)$
with an isolated critical point at the origin. It can be also defined for a pair $(V,\varphi)$, where $V$ is a
complex quasi-projective variety and $\varphi$ is an automorphism of $V$ of finite order: \cite{DL}. (The spectrum
of a germ of a holomorphic function is essentially %% just 
the spectrum of its motivic Milnor fibre defined in \cite{DL}.)

Traditionally the spectrum is defined as a finite collection of rational numbers with integer multiplicities
(possibly negative ones) and therefore as an element of the group ring $\ZZ[\QQ]$
of the group $\QQ$ of rational numbers. Let $K_0^{\ZZ}(\Var)$ be
the Grothendieck group of pairs $(V,\varphi)$, where $V$ is a quasi-projective variety and $\varphi$ is an
automorphism of $V$ of finite order (with respect to the disjoint union). The group $K_0^{\ZZ}(\Var)$ is a ring
with the multiplication defined by the cartesian product of varieties and with the automorphism defined by the
diagonal action. The spectrum can be regarded as a group homomorphism from $K_0^{\ZZ}(\Var)$ to the group ring
$\ZZ[\QQ]$. It is not a ring homomorphism. Rational numbers (i.e. elements of the group $\QQ$) are in one-to-one
correspondence with the elements of the group $(\QQ/\ZZ)\times\ZZ$: $r\longleftrightarrow(\{r\},[r])$, where
$\{r\}$ is the fractional part of the rational number $r$ and $[r]$ is its integer part. In this way the spectrum
can be considered as an element of the group ring $\ZZ[(\QQ/\ZZ)\times\ZZ]$. The corresponding map from
$K_0^{\ZZ}(\Var)$ to $\ZZ[(\QQ/\ZZ)\times\ZZ]$ is a ring homomorphism.

Being a group homomorphism from the Grothendieck ring $K_0^{\ZZ}(\Var)$ to $\ZZ[\QQ]$ or to
$\ZZ[(\QQ/\ZZ)\times\ZZ]$, the spectrum can be regarded as a sort of generalized Euler characteristic.
(The spectrum of a pair $(V,\varphi)$ determines the Euler characteristic of $V$ in a natural way.)
The Euler characteristic and some of its generalizations satisfy Macdonald type equations. For the
Euler characteristic $\chi(\cdot)$ itself this means the following. Let $S^nV=V^n/S_n$ be the $n$th
symmetric power of the variety $V$. Then one has (\cite{Mac})
$$
1+\sum_{n=1}^{\infty}\chi(S^nV)t^n=(1-t)^{-\chi(X)}\,.
$$
(Here and below we consider the additive Euler characteristic defined as the alternating sum of the
dimensions of the cohomology groups with compact support.)

A Macdonald type equation for an invariant is an expression for the generating series of the values of
the invariant for the symmetric powers of a space (or for their analogues) as a series not depending
on the space in the power equal to the value of the invariant for the space itself.
For some generalizations of the Euler characteristic (with values in certain rings) the Macdonald
type equations can be formulated in terms of the so-called power structures over the rings (\cite{GLM1})
which can be defined through (pre-)$\lambda$-ring structures on them. The group ring $\ZZ[\calA]$ of
an abelian group $\calA$ has a natural $\lambda$-structure. The spectrum of a pair $(V,\varphi)$
as an element of $\ZZ[(\QQ/\ZZ)\times\ZZ]$ satisfies a Macdonald type equation.

For a complex quasi-projective manifold $V$ with an action of a finite group $G$ and with a $G$-equivariant
automorphism $\varphi$ of finite order, one can define the notion of an orbifold spectrum as an element
of the group ring $\ZZ[\QQ]$: \cite{ET}. This spectrum takes into account not only the logarithms of the eigenvalues
of the action of the transformation $\varphi$ on the cohomology groups, but also the so-called ages
of elements of $G$ at their fixed points (both being rational numbers). Algebraic manipulations with these
two summands are different. The first ones behave as elements of $\QQ/\ZZ$, whereas the second ones~---
as elements of $\QQ$. This explains why the existence of a Macdonald type equation for this spectrum is
not clear. One can consider refinements of the orbifold spectrum as elements of $\ZZ[(Q/Z)\times\QQ]$
or of $\ZZ[(Q/Z)\times\QQ\times\QQ]$. The last notion is equivalent to the notion of the equivariant
orbifold Hodge--Deligne polynomial.

We define higher order spectra of a triple $(V,G,\varphi)$ with a quasi-projective $G$-manifold $V$,
some of their refinements and give Macdonald type equations for them.

%%%%%%%%%%%%%%%%%%%%%%%%%%%%%%%%
\section{$\lambda$-structure on the group ring of an abelian group}\label{lambda}
%%%%%%%%%%%%%%%%%%%%%%%%%%%%%%%%
Let $\calA$ be an abelian group (with the sum as the group operation) and let $\ZZ[\calA]$ be
the group ring of $\calA$. The elements of $\ZZ[\calA]$ are finite sums of the form
$\sum\limits_{a\in\calA}k_a \{a\}$ with $k_a\in\ZZ$. (We use the curly brackets in order to avoid ambiguity
when $\calA$ is a subgroup of the group $\RR$ of real numbers: $\ZZ$ or  $\QQ$.) The ring operations on
$\ZZ[\calA]$ are defined by $\sum k'_a \{a\}+\sum k''_a \{a\}=\sum (k'_a+k''_a)\{a\}$,
$(\sum k'_a \{a\})(\sum k''_a \{a\})=\sum\limits_{a,b\in\calA} (k'_a\cdot k''_b)\{a+b\}$.

Let $R$ be a commutative ring with a unit. %% ???
A $\lambda$-ring structure on $R$ (sometimes called a pre-$\lambda$-ring structure; see, e.g., \cite{Knutson})
is an ``addi\-ti\-ve-to-multi\-pli\-ca\-ti\-ve'' homomorphism $\lambda: R\to 1+T\cdot R[[T]]$ ($a\mapsto\lambda_a(T)$)
such that $\lambda_a(T)=1+aT+\ldots$. This means that $\lambda_{a+b}(T)=\lambda_a(T)\cdot\lambda_b(T)$ for 
$a,b\in R$.

The notion of a $\lambda$-ring structure is closely related to the notion of a power structure defined
in \cite{GLM1}. Sometimes a power structure has its own good description which permits to use it, e.g.,
for obtaining formulae for generating series of some invariants.
A power structure over a ring $R$ is a map $(1+T\cdot R[[T]])\times R\to 1+T\cdot R[[T]]$,
$(A(T),m)\mapsto (A(T))^m$ ($A(t)=1+a_1T+a_2T^2+\ldots$, $a_i\in R$, $m\in R$) possessing all
the basic properties of the exponential function: see \cite{GLM1}. A $\lambda$-structure
on a ring defines a power structure over it. On the other hand there are, in general, many
$\lambda$-structures on a ring corresponding to one power structure over it.

The group ring $\ZZ[\calA]$ of an abelian group $\calA$ can be considered as a $\lambda$-ring.
The $\lambda$-ring structure on $\ZZ[\calA]$ is natural and must be well known. However,
we have not found its description in the literature. Therefore we give here a definition of
a $\lambda$-structure on the ring $\ZZ[\calA]$. (A similar construction was discussed in \cite{GLM4}
for the ring of formal ``power'' series over a semigroup with certain finiteness properties.)

The group ring $\ZZ[\calA]$ can be regarded as the Grothendieck ring of the group semiring
$S[\calA]$ of maps of finite sets to the group $\calA$.
Elements of $S[\calA]$ are the equivalence classes of the pairs $(X,\psi)$ consisting
of a finite set $X$ and a map $\psi:X\to\calA$. (Two pairs $(X_1,\psi_1)$ and $(X_2,\psi_2)$ are equivalent
if there exists a one-to-one map $\xi:X_1\to X_2$ such that $\psi_2\circ\xi=\psi_1$.) The group ring
$\ZZ[\calA]$ is the Grothendieck ring of the semiring $S[\calA]$. Elements of the ring $\ZZ[\calA]$
are the equivalence classes of maps of finite virtual sets (i.e.\ formal differences of sets) to $\calA$.
For a pair $(X,\psi)$ representing an element $a$ of the semiring $S[\calA]$, let its $n$th symmetric power
$S^n(X,\psi)$ be the pair $(S^nX,\psi^{(n)})$ consisting of the $n$th symmetric power $S^nX=X^n/S_n$ of
the set $X$ and of the map $\psi^{(n)}:S^nX\to\calA$ defined by
$\psi^{(n)}(x_1,\ldots, x_n)=\sum\limits_{i=1}^n\psi(x_i)$. One can easily see that the series
$$
\lambda_a(T)=1+[S^1(X,\psi)]T+[S^2(X,\psi)]T^2+[S^3(X,\psi)]T^3+\ldots
$$
defines a $\lambda$-structure on the ring $\ZZ[\calA]$ (or rather a $\lambda$-structure on the semiring
$S[\calA]$ extendable to a $\lambda$-structure on $\ZZ[\calA]$ in a natural way).

The power structure over the ring $\ZZ[\calA]$ corresponding to this $\lambda$-structure can be
described in the following way. Let $A(T)=1+a_1T+a_2T^2+\ldots$, where $a_i=[(X_i,\psi_i)]$, $m=[(M,\psi)]$
with finite {\em sets} $X_i$ and $M$ (thus $a_i$ and $m$ being actually elements of the semiring $S[\calA]$).
Then
$$
\left(A(T)\right)^m= 1+
\sum_{n=1}^\infty
\left(
\sum_{\{n_i\}:\sum in_i=n}
\left[
\left(
((
M^{\sum_i n_i}\setminus\Delta
)
\times\prod_i X_i^{n_i}
)
\left/\prod_i S_{n_i}\right.,
\psi_{\{n_i\}}
\right)
\right]
\right)\cdot T^n\,,
$$
where $\Delta$ is the big diagonal in $M^{\sum_i n_i}$ (consisting of $(\sum n_i)$-tuples of points of $M$
with at least two coinciding ones), the group $\prod_i S_{n_i}$ acts on
$(M^{\sum_i n_i}\setminus\Delta)\times\prod_i X_i^{n_i}$ by permuting simultaneously the factors in
$M^{\sum_i n_i}=\prod_i M^{n_i}$ and in $\prod_i X_i^{n_i}$,
$$
\psi_{\{n_i\}}(\{y_i^j\}, \{x_i^j\})=\sum_i (i\cdot \psi(y_i^j)+\psi_i(x_i^j))\,,
$$
where $y_i^j$ and $x_i^j$, $j=1,\ldots, n_i$, are the $j$th components of the point in $M^{n_i}$
and in $X_i^{n_i}$ respectively
(cf.\ \cite[Equation~(1)]{GLM1}); a similar construction for the Grothendieck ring of quasi-projective
varieties with maps to an abelian manifold was introduced in \cite{MorShen}).

The ring $R[z_1, \ldots, z_n]$ of polynomials in $z_1, \ldots, z_n$ with the coefficients
from a $\lambda$-ring $R$ carries a natural $\lambda$-structure: see, e.g., \cite{Knutson}.
The same holds for the ring $R[z_1^{1/m}, \ldots, z_n^{1/m}]$ of fractional power
polynomials in $z_1, \ldots, z_n$. In terms of the corresponding power structure one can write
$$
(1-T)^{-\sum_{\underline{k}}a_{\underline{k}}\underline{z}^{\underline{k}}}=
\prod_{\underline{k}}\lambda_{a_{\underline{k}}}(\underline{z}^{\underline{k}}T)\,,
$$
where $\underline{z}=(z_1, \ldots, z_n)$, ${\underline{k}}=(k_1, \ldots, k_n)$,
$\underline{z}^{\underline{k}}=z_1^{k_1}\cdot\ldots\cdot z_n^{k_n}$.

The ring $R(G)$ of representations of a group $G$
%% (and the subring $R_1(G)$ generated by the one-dimensional representations)
is regarded as a $\lambda$-ring with the $\lambda$-structure
defined by
$$
\lambda_{[\omega]}(T)=1+[\omega]t+[S^2\omega]T^2+[S^3\omega]T^3+\ldots\,,
$$
where $\omega$ is a representation of $G$, $S^n\omega$ is its $n$th symmetric power.

%%%%%%%%%%%%%%%%%%%%%%%%%%%%%%%%%%%%%%%%%%%%%%%%%%%%%%%%%%%%%%%%%%%
\section{The spectrum and the equivariant Hodge-Deligne polynomial}\label{spectrum}
%%%%%%%%%%%%%%%%%%%%%%%%%%%%%%%%%%%%%%%%%%%%%%%%%%%%%%%%%%%%%%%%%%%
Let $V$ be a complex quasi-projective variety with an automorphism $\varphi$ of finite order.
For a rational $\alpha$, $0\le\alpha<1$, let $H^k_{\alpha}(V)$ be the subspace of $H^k(V)=H^k_c(V;\CC)$
(the cohomology group with compact support) consisting of the eigenvectors of $\varphi^*$ with
the eigenvalue $\be[\alpha]:=\exp{(2\pi\alpha i)}$. The subspace $H^k_{\alpha}(V)$ carries a natural
mixed Hodge structure.

\begin{definition} (see, e.g., \cite{DL})
 The {\em (Hodge) spectrum} ${\rm hsp}(V,\varphi)$ of the pair $(V,\varphi)$ is defined by
 $$
 {\rm hsp}(V,\varphi)=\sum_{k,p,q,\alpha}(-1)^k\dim(H^k_{\alpha}(V))^{p,q}\cdot\{p+\alpha\}\in\ZZ[\QQ]\,.
 $$
\end{definition}

The spectrum ${\rm hsp}(V,\varphi)$ can be identified either with the fractional power polynomial
(Poincar\'e polynomial)
$$
p_{(V,\varphi)}(t)=\sum_{k,p,q,\alpha}(-1)^k\dim(H^k_{\alpha}(V))^{p,q}\cdot t^{p+\alpha}\in\ZZ[t^{1/m}]
$$
or with the equivariant Poincar\'e polynomial
$$
\overline{e}_{(V,\varphi)}(t)=
\sum_{k,p,q,\alpha}(-1)^k\dim(H^k_{\alpha}(V))^{p,q}\omega_{\be[\alpha]}\cdot t^{p}\in R_f(\ZZ)[t]\,,
$$
where $R_f(Z)$ is the ring of finite order representations of the cyclic group $\ZZ$,
$\omega_{\be[\alpha]}$ is the one-dimensional representation of $\ZZ$ with the character equal to
$\be[\alpha]$ at $1$. Both rings $\ZZ[t^{1/m}]$ and $R_f(\ZZ)[t]$ carry natural $\lambda$-structures
and thus power structures. However, the natural power structure on $\ZZ[t^{1/m}]$ is not compatible with
the multiplication: the map 
$$
p_{\bullet}:K_0^{\ZZ}(\Var)\to \ZZ[t^{1/m}]
$$
is not a ring homomorphism. Therefore a natural Macdonald type equation for the spectrum is formulated in
terms of the equivariant Poincar\'e polynomial $\overline{e}_{(V,\varphi)}(t)$. Moreover, a stronger
statement can be formulated in terms of the equivariant Hodge-Deligne polynomial of the pair $(V,\varphi)$.

\begin{definition} (\cite{DimcaL}, see also \cite{Stapledon})
 The {\em equivariant Hodge-Deligne polynomial} of the pair $(V,\varphi)$ is
 $$
 e_{(V,\varphi)}(u,v)=
\sum_{k,p,q,\alpha}(-1)^k\dim(H^k_{\alpha}(V))^{p,q}\omega_{\be[\alpha]}\cdot u^{p}v^q\in R_f(\ZZ)[u,v]\,,
 $$
\end{definition}

One has $\overline{e}_{(V,\varphi)}(t)=e_{(V,\varphi)}(t,1)$.

Let $S^nV$ be the $n$th symmetric power of the variety $V$. The transformation $\varphi:V\to V$
defines a transformation $\varphi^{(n)}:S^nV\to S^nV$ in a natural way.

\begin{theorem}\label{Mac-EquiCheah}
%% \begin{proposition}\label{Mac-EquiCheah}
 One has
 \begin{equation}\label{equiCheah}
  1+e_{(V,\varphi)}(u,v)T+e_{(S^2V,\varphi^{(2)})}(u,v)T^2+e_{(S^3V,\varphi^{(3)})}(u,v)T^3+\ldots=
  (1-T)^{-e_{(V,\varphi)}(u,v)}\,,
 \end{equation}
where the RHS of (\ref{equiCheah}) is understood in terms of the power structure over the ring
$R_f(\ZZ)[u,v]$.
%% \end{proposition}
\end{theorem}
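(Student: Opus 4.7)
The plan is to verify (\ref{equiCheah}) by exhibiting both sides as additive-to-multiplicative maps on the Grothendieck ring $K_0^\ZZ(\Var)$ and then reducing to a direct check on generators.

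First I would establish that $(V,\varphi) \mapsto e_{(V,\varphi)}(u,v)$ defines a ring homomorphism $e \colon K_0^\ZZ(\Var) \to R_f(\ZZ)[u,v]$. Additivity under the scissor relation (with $W \subset V$ a $\varphi$-invariant closed subvariety) follows from the long exact sequence for cohomology with compact support, since the strictness of morphisms of mixed Hodge structures together with the $\varphi^*$-equivariance of the sequence respects simultaneously the $(p,q)$-decomposition and the eigenspace decomposition. Multiplicativity under Cartesian product with the diagonal automorphism follows from the K\"unneth formula applied eigenspace-by-eigenspace, using $\omega_{\be[\alpha]} \cdot \omega_{\be[\beta]} = \omega_{\be[\alpha+\beta]}$ in $R_f(\ZZ)$.

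Next I would introduce the generating series $\Phi(V,\varphi;T) = \sum_{n\geq 0} e_{(S^n V,\varphi^{(n)})}(u,v)\, T^n$. The splitting $S^n(V_1 \sqcup V_2) = \bigsqcup_{k+l=n} (S^k V_1) \times (S^l V_2)$, compatible with the induced automorphisms, combined with K\"unneth yields $\Phi(V_1 \sqcup V_2;T) = \Phi(V_1;T)\,\Phi(V_2;T)$. This matches the identity $(1-T)^{-(a+b)} = (1-T)^{-a}(1-T)^{-b}$ for the RHS under the power structure from Section~\ref{lambda}. Hence both sides of (\ref{equiCheah}) give additive-to-multiplicative maps on $K_0^\ZZ(\Var)$, and it suffices to verify equality on a generating family. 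One may stratify to reduce to the case of a smooth projective $V$ on which $\langle \varphi \rangle$ acts freely or trivially, so that $H^k_c(V)$ is pure of weight $k$; there a G\"ottsche-Cheah type analysis of the $S_n$-action on $H^*_c(V^n) \to H^*_c(S^n V)$, eigenspace-by-eigenspace, matches the coefficient of $T^n$ on both sides.

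The main obstacle will be this last step: the $\lambda$-operation on $R_f(\ZZ)[u,v]$ is the usual symmetric power on the even-degree $(p,q)$-part but an exterior power on the odd-degree part, and one must match this super-symmetric structure against the geometric $S_n$-action while simultaneously tracking three gradings (topological $k$, Hodge bidegree $(p,q)$, and eigenvalue $\alpha$). The explicit formula $(1-\omega_\alpha u^p v^q T)^{-1} = \lambda_{\omega_\alpha}(u^p v^q T)$, together with the factorization $(1-T)^{-\sum_j k_j \omega_{\alpha_j} u^{p_j} v^{q_j}} = \prod_j (1 - \omega_{\alpha_j} u^{p_j} v^{q_j} T)^{-k_j}$ from Section~\ref{lambda}, reduces the verification to a finite combinatorial identity per monomial, which is where the actual work lies.
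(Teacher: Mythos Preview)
Your proposal is correct and is precisely the unpacking of the paper's own proof, which consists of the single remark that Cheah's argument for the ordinary Hodge--Deligne polynomial goes through once one tracks the $\varphi^*$-eigenspace decomposition alongside the $(p,q)$-grading. The one inessential slip is the clause ``on which $\langle\varphi\rangle$ acts freely or trivially'': stratifying by isotropy type does not simplify the eigenspace structure on cohomology, and all you actually use (and need) is purity on smooth projective pieces together with the super-symmetric bookkeeping you describe in the final paragraph.
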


The {\em proof} is essentially contained in \cite{Cheah} where J.~Cheah proved an analogue of (\ref{equiCheah})
for the usual (non-equivariant) Hodge-Deligne polynomial. Theorem~\ref{Mac-EquiCheah} can be deduced from
the arguments of \cite{Cheah} by taking care of different eigenspaces.

Theorem~\ref{Mac-EquiCheah} means that the natural map $e_{\bullet}$ from $K_0^{\ZZ}(\Var)$ to $R_f(\ZZ)[u,v]$
is a $\lambda$-ring homomorphism.

\begin{corollary}\label{Mac-EquiCheah-reduced}
 One has
 \begin{equation}\label{equiCheah-reduced}
  1+\overline{e}_{(V,\varphi)}(t)T+\overline{e}_{(S^2V,\varphi^{(2)})}(t)T^2+
  \overline{e}_{(S^3V,\varphi^{(3)})}(t)T^3+\ldots=
  (1-T)^{-\overline{e}_{(V,\varphi)}(t)}\,,
 \end{equation}
where the RHS of (\ref{equiCheah-reduced}) is understood in terms of the power structure over the ring
$R_f(\ZZ)[t]$.
\end{corollary}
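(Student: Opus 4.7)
The plan is to deduce Corollary~\ref{Mac-EquiCheah-reduced} directly from Theorem~\ref{Mac-EquiCheah} by specializing $v=1$ in equation (\ref{equiCheah}) (and renaming $u$ as $t$). The left-hand side of (\ref{equiCheah-reduced}) follows at once: by the identity $\overline{e}_{(V,\varphi)}(t)=e_{(V,\varphi)}(t,1)$ noted in the excerpt just after the definition of $e_{(V,\varphi)}(u,v)$, and its evident analogue for each symmetric power $(S^n V, \varphi^{(n)})$, the LHS of (\ref{equiCheah}) specializes coefficient-wise to the LHS of (\ref{equiCheah-reduced}).

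For the right-hand side I would unfold the power structure on $R_f(\ZZ)[u,v]$ using the product formula from Section~\ref{lambda}:
$$
(1-T)^{-\sum_{p,q} a_{p,q}\, u^p v^q} = \prod_{p,q} \lambda_{a_{p,q}}(u^p v^q T),
$$
taking $a_{p,q}=\sum_{k,\alpha}(-1)^k\dim(H^k_\alpha(V))^{p,q}\,\omega_{\be[\alpha]}\in R_f(\ZZ)$ so that the exponent is exactly $e_{(V,\varphi)}(u,v)$. Setting $v=1$, $u=t$ inside the product yields $\prod_{p,q}\lambda_{a_{p,q}}(t^p T)$, and the additive-to-multiplicative property $\lambda_{a+b}(T)=\lambda_a(T)\lambda_b(T)$ allows the inner product over $q$ to be collapsed, giving $\prod_p \lambda_{b_p}(t^p T)$ with $b_p=\sum_q a_{p,q}$. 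The analogous product formula for the power structure on $R_f(\ZZ)[t]$ then identifies this with $(1-T)^{-\sum_p b_p t^p}=(1-T)^{-\overline{e}_{(V,\varphi)}(t)}$, which is the RHS of (\ref{equiCheah-reduced}).

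The only point deserving care is the legitimacy of the specialization $v\mapsto 1$ inside the power-structure expression. This is safe here because the variables $u,v$ enter the product formula only as monomial labels in the arguments of $\lambda_{a_{p,q}}(\,\cdot\, T)$, not as $\lambda$-ring elements being operated on; no $\lambda$-operation is ever applied to $v$ itself. One should, however, resist the temptation to phrase the argument as a $\lambda$-ring homomorphism $R_f(\ZZ)[u,v]\to R_f(\ZZ)[t]$ sending $v\mapsto 1$, since such a substitution is \emph{not} compatible with the natural $\lambda$-structures (one would have $\lambda_v(T)=1+vT$ but $\lambda_1(T)=(1-T)^{-1}$). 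Once this subtlety is acknowledged, the derivation is essentially a one-line regrouping of factors in the explicit product, so I expect no further obstacle.
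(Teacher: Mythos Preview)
Your proposal is correct and follows the paper's intended route: the corollary is stated without proof, as the immediate specialization $u\mapsto t$, $v\mapsto 1$ of Theorem~\ref{Mac-EquiCheah}. Your explicit unpacking via the product formula $(1-T)^{-\sum a_{p,q}u^pv^q}=\prod_{p,q}\lambda_{a_{p,q}}(u^pv^qT)$ and regrouping over $q$ is exactly the right way to make the specialization rigorous on the power-structure side, and your caution that $v\mapsto 1$ is \emph{not} a $\lambda$-ring homomorphism (since $\lambda_v(T)=1+vT$ while $\lambda_1(T)=(1-T)^{-1}$ in $R_f(\ZZ)$) is a genuine point that the paper leaves implicit.
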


%%%%%%%%%%%%%%%%%%%%%%%%%%%%%%%%%%%%%%%%%%%%%%%%%%%%%%%%%%%%%%%%%%%
\section{The orbifold spectrum and the equivariant orbifold Hodge-Deligne polynomial}\label{orbifold}
%%%%%%%%%%%%%%%%%%%%%%%%%%%%%%%%%%%%%%%%%%%%%%%%%%%%%%%%%%%%%%%%%%%
Let $V$ be a complex quasi-projective manifold of dimension $d$
with an action of a finite group $G$ and with a $G$-equivariant automorphism
$\varphi$ of finite order.
One can say that the notion of the orbifold spectrum of the triple $(V,G,\varphi)$ is inspired by
the notion of the orbifold Hodge-Deligne polynomial: \cite{BD}.

Let $G_*$ be the set of conjugacy classes of elements of $G$.
For an element $g\in G$, let $V^{\langle g\rangle}=\{x\in V:gx=x\}$ be the fixed
point set of $g$, let $C_G(g)=\{h\in G: h^{-1}gh=g\}\subset G$ be the centralizer of $g$.
The group $C_G(g)$ acts on the fixed point set $V^{\langle g\rangle}$. Let $\widehat{\varphi}$
be the transformation of the quotient $V^{\langle g\rangle}/C_G(g)$ induced by $\varphi$.
For a point
$x\in V^{\langle g\rangle}$ the {\em age} of $g$ (or {\em fermion shift number}) is defined in the following way
(\cite[Equation (3.17)]{Zaslow}, \cite[Subsection 2.1]{Ito-Reid}).
The element $g$ acts on the tangent space $T_xV$ as a complex linear operator
of finite order. It can be represented by a diagonal matrix with the diagonal
entries ${\mathbf{e}}[\beta_1]$, \dots, ${\mathbf{e}}[\beta_d]$, where $0\le\beta_i<1$
for $i=1, \ldots, d$ and ${\mathbf{e}}[r]:=\exp{(2\pi i r)}$ for a real number $r$.
The {\em age} of the element $g$ at the point $x$ is defined by 
${\rm age}_x(g)=\sum\limits_{i=1}^d\beta_i\in \QQ_{\ge0}$.
For a rational number $\beta\ge 0$, let $V^{\langle g\rangle}_{\beta}$ be the subspace of the fixed point set
$V^{\langle g\rangle}$ consisting of the point $x$ with ${\rm age}_x(g)=\beta$. (The subspace
$V^{\langle g\rangle}_{\beta}$ of $V^{\langle g\rangle}$ is a union of components of the latter one.)

\begin{definition} (cf.\ \cite{ET})
 The {\em orbifold spectrum} of the triple $(V,G,\varphi)$ is
 $$
 {\rm hsp}^{\rm orb}(V,G,\varphi)=\sum_{[g]\in G_*} \sum_{\beta\in\QQ_{\ge0}}
 {\rm hsp}(V^{\langle g\rangle}_{\beta}/C_G(g),\widehat{\varphi})\cdot\{\beta\}\in\ZZ[\QQ]\,.
 $$
\end{definition}

As above the spectrum ${\rm hsp}^{\rm orb}(V,G,\varphi)$ can be identified with the orbifold Poincar\'e polynomial
$$
p^{\rm orb}_{(V,G,\varphi)}(t)=\sum_{[g]\in G_*} \sum_{\beta\in\QQ_{\ge0}}
 p_{(V^{\langle g\rangle}_{\beta}/C_G(g),\widehat{\varphi})}(t)\cdot t^{\beta}\in\ZZ[t^{1/m}]\,.
$$

It can be regarded as a reduction of the equivariant orbifold Poincar\'e polynomial
$$
\overline{e}^{\rm orb}_{(V,G,\varphi)}(t)=\sum_{[g]\in G_*} \sum_{\beta\in\QQ_{\ge0}}
\overline{e}_{(V^{\langle g\rangle}_{\beta}/C_G(g),\widehat{\varphi})}(t)\cdot t^{\beta}\in R_f(\ZZ)[t^{1/m}]
$$
or of the equivariant orbifold Hodge-Deligne polynomial
$$
 e^{\rm orb}_{(V,\varphi)}(u,v)
 =\sum_{k,p,q,\alpha, [g],\beta}(-1)^k\dim(H^k_{\alpha}(V^{\langle g\rangle}_{\beta}/C_G(g)))^{p,q}
 \omega_{\be[\alpha]}\cdot u^{p}v^q(uv)^{\beta}
$$
(an element of $R_f(\ZZ)[u,v][(uv)^{1/m}]$).

As it was explained above, the presence of (rational) summands of different nature~--- elements of the quotient $\QQ/\ZZ$
and elements of $\QQ$ itself~--- leads to the situation when the existence of a Macdonald type equation for the
orbifold spectrum (and for the orbifold Poincar\'e polynomial) is doubtful. On the other hand there exist Macdonald
type equations for the equivariant orbifold Poincar\'e polynomial and for the equivariant orbifold
Hodge-Deligne polynomial (see Section~\ref{higher}). This inspires the definition of the corresponding versions of spectra.

\begin{definition}
 The {\em orbifold pair spectrum} ${\rm hsp}_2^{\rm orb}(V,G,\varphi)$ of $(V,G,\varphi)$ is
 $$
 \sum_{[g]\in G_*} \sum_{\beta\in\QQ_{\ge0}}
 \sum_{k,p,q,\alpha}(-1)^k\dim(H^k_{\alpha}(V^{\langle g\rangle}_{\beta}/C_G(g),\widehat{\varphi}))^{p,q}
 \{(\alpha, p+\beta)\}\in\ZZ[(\QQ/\ZZ)\times\QQ]\,.
 $$
 The {\em orbifold triple spectrum} ${\rm hsp}_3^{\rm orb}(V,G,\varphi)$ of $(V,G,\varphi)$ is
 $$
 \sum_{[g]\in G_*} \sum_{\beta\in\QQ_{\ge0}}
 \sum_{k,p,q,\alpha}(-1)^k\dim(H^k_{\alpha}(V^{\langle g\rangle}_{\beta}/C_G(g),\widehat{\varphi}))^{p,q}
 \{(\alpha, p+\beta,q+\beta)\}\in\ZZ[(\QQ/\ZZ)\times\QQ\times\QQ]\,.
 $$
\end{definition}

%%%%%%%%%%%%%%%%%%%%%%%%%%%%%%%%%%%%%%%%%%%%%%%%%%%%%%%%%%%%%%%%%%%
\section{Higher order spectrum and equivariant Hodge-Deligne polynomial}\label{higher}
%%%%%%%%%%%%%%%%%%%%%%%%%%%%%%%%%%%%%%%%%%%%%%%%%%%%%%%%%%%%%%%%%%%
The notions of the higher order spectrum of a triple $(V,G,\varphi)$ and of the higher order
equivariant Hodge-Deligne polynomial of it are inspired by the notions of the higher order
Euler characteristic (\cite{AS}, \cite{HH}) and of the corresponding higher order generalized
Euler characteristic (\cite{GLM5}).

Let $(V,G,\varphi)$, $V^{\langle g\rangle}_{\beta}$ and $\widehat{\varphi}$ be as in Section~{\ref{orbifold}}
and let $k\ge 1$.

\begin{definition}
 The {\em spectrum of order} $k$ of the triple $(V,G,\varphi)$ is
 $$
 {\rm hsp}^{(k)}(V,G,\varphi)=\sum_{[g]\in G_*} \sum_{\beta\in\QQ_{\ge0}}
 {\rm hsp}^{(k-1)}(V^{\langle g\rangle}_{\beta}, C_G(g), \varphi)\cdot\{\beta\}\in\ZZ[\QQ]\,.
 $$
 where ${\rm hsp}^{(0)}(V,G,\varphi):={\rm hsp}(V/G,\widehat{\varphi})$.
\end{definition}

The orbifold spectrum is the spectrum of order $1$.

Like above the spectrum of order $k$ can be described by the corresponding order $k$ Poincar\'e polynomial:
$$
p^{(k)}_{(V,G,\varphi)}(t)=\sum_{[g]\in G_*} \sum_{\beta\in\QQ_{\ge0}}
 p^{(k-1)}_{(V^{\langle g\rangle}_{\beta}, C_G(g), \varphi)}(t)\cdot t^{\beta}\in\ZZ[t^{1/m}]\,,
$$
where $p^{(1)}_{\bullet}(t):=p^{\rm orb}_{\bullet}(t)$.

It can be regarded as a reduction of the {\em equivariant order $k$ Poincar\'e polynomial}
$$
\overline{e}^{(k)}_{(V,G,\varphi)}(t)=\sum_{[g]\in G_*} \sum_{\beta\in\QQ_{\ge0}}
\overline{e}^{(k-1)}_{(V^{\langle g\rangle}_{\beta}, C_G(g), \varphi)}(t)\cdot t^{\beta}\in R_f(\ZZ)[t^{1/m}]
$$
or of the {\em equivariant order $k$ Hodge-Deligne polynomial}
 $$
 e^{(k)}_{(V,G,\varphi)}(u,v)=\sum_{[g]\in G_*} \sum_{\beta\in\QQ_{\ge0}}
 e^{(k-1)}_{(V^{\langle g\rangle}_{\beta}, C_G(g), \varphi)}(u,v)(uv)^{\beta}\in R_f(\ZZ)[u,v][(uv)^{1/m}]\,.
 $$

The following definition is an analogue of the definition of the orbifold pair and triple spectra
in Section~\ref{orbifold}.
 
 \begin{definition}
 The {\em pair spectrum of order} $k$ of $(V,G,\varphi)$ is
 $$
 {\rm hsp}_2^{(k)}(V,G,\varphi)=\sum_{[g]\in G_*} \sum_{\beta\in\QQ_{\ge0}}
 {\rm hsp}_2^{(k-1)}(V^{\langle g\rangle}_{\beta},C_G(g),\varphi)
 \{(0, \beta)\}\in\ZZ[(\QQ/\ZZ)\times\QQ]\,.
 $$
 The {\em triple spectrum of order} $k$ of $(V,G,\varphi)$ is
 $$
 {\rm hsp}_3^{(k)}(V,G,\varphi)=\sum_{[g]\in G_*} \sum_{\beta\in\QQ_{\ge0}}
 {\rm hsp}_3^{(k-1)}(V^{\langle g\rangle}_{\beta},C_G(g),\varphi)
 \{(0, \beta, \beta)\}\in\ZZ[(\QQ/\ZZ)\times\QQ\times\QQ]\,.
 $$
\end{definition}
 
The following statement is a Macdonald type equation for the equivariant order $k$ Hodge-Deligne polynomial.
For $n\ge 1$, the cartesian power $V^n$ of the manifold $V$ is endowed with the natural action of the wreath
product $G_n=G\wr S_n=G^n \rtimes S_n$ generated by the componentwise action of the cartesian power $G^n$
and by the natural action of the symmetric group $S_n$ (permuting the factors). Also one has the automorphism
$\varphi^{(n)}$ of $V^n$ induced by $\varphi$. The triple $(V_n, G_n, \varphi^{(n)})$ should be regarded
as an analogue of the symmetric power of the triple $(V,G,\varphi)$.

\begin{example}
 Let $f(z_1,\ldots, z_n)$ be a quasihomogeneous function with the quasi-weights $q_1$, \ldots, $q_n$
 (and with the quasi-degree $1$) and let $G\subset {\rm GL}(n,\CC)$ be a finite group of its symmetries
 ($f(g\overline{z})=f(\overline{z})$ for $g\in G$). The Milnor fibre $M_f=\{f=1\}$ of $f$ is an $(n-1)$-dimensional
 complex manifold with an action of a group $G$ and with a natural finite order automorphism $\varphi$
 (the monodromy transformation or the exponential grading operator):
 $$
 \varphi(z_1,\ldots, z_n)=({\mathbf e}[q_1]z_1,\ldots, {\mathbf e}[q_n]z_n)\,.
 $$
 For $s\ge 1$, let $\CC^{ns}=(\CC^n)^s$ be the affine space with the coordinates $z_i^{(j)}$, $1\le i\le n$,
 $1\le j\le s$. The system of equations $f(z_1^{(j)}, \ldots, z_n^{(j)})=0$, $j=1, \ldots, s$,
 defines a complete intersection
 in $\CC^{ns}$. Its Milnor fibre $M=\{f(z_1^{(j)}, \ldots, z_n^{(j)})=1, \text{ for } j=1, \ldots, s\}$
 is the $s$th cartesian power of the 
 Milnor fibre $M_f$ of $f$ and has a natural action of the wreath product $G_s$.
 The spectrum of a complete intersection singularity is defined by a choice of a monodromy
 transformation. A natural monodromy transformation on $M$ is the $s$th cartesian power $\varphi^{(s)}$ of the 
 monodromy transformation $\varphi$. Thus the triple $(M,G_s,\varphi^{(s)})$ can be regarded
 as an analogue of the $s$th symmetric power of the triple $(M_f,G,\varphi)$.
\end{example}

\begin{theorem}\label{Mac-main}
 Let $V$ be a (smooth) quasi-projective $G$-manifold of dimension $d$ 
 with a $G$ equivariant automorphism $\varphi$ of finite order. One has
\begin{eqnarray}\label{Mac-main-eqn}
&1&+\sum_{n=1}^{\infty}e^{(k)}_{(V^n, G_n,\varphi^{(n)})}(u,v)\cdot T^n\nonumber
\\
&=&\left(\prod\limits_{r_1, \ldots,r_k\geq 1}\left(1-(uv)^{(r_1r_2\cdots r_k)d/2}\cdot T^{r_1r_2\cdots r_k}\right)^{r_2r_3^2\cdots r_k^{k-1}}\right)
^{-e^{(k)}_{(V, G,\varphi)}(u,v)}\,,
\end{eqnarray}
where the RHS is understood in terms of the power structure over the ring $R_f(\ZZ)[u,v][(uv)^{1/m}]$.
\end{theorem}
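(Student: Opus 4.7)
\begin{proof*}{Proof proposal}
My plan is to argue by induction on $k \geq 1$, using as the core structural input the classical parametrization of conjugacy classes of $G_n = G \wr S_n$ by functions $\lambda \colon G_* \to \{\text{partitions}\}$ with $\sum_{[c]}|\lambda^{[c]}| = n$. For a representative $(\underline{g}, \sigma)$ of such a class, each cycle of $\sigma$ of length $r$ labelled by $[c] \in G_*$ contributes a factor $V^{\langle c\rangle}$ to the fixed locus $(V^n)^{\langle(\underline{g},\sigma)\rangle}$, a factor isomorphic to $C_G(c) \times \ZZ/r\ZZ$ to the centralizer (beyond the $S_{m_r([c])}$ permuting cycles of equal length and label), and an age contribution $(r-1)d/2 + {\rm age}_x(c)$ at a fixed point $x$. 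Crucially, the cyclic $\ZZ/r\ZZ$ factor acts trivially on the fixed locus. In the base case $k = 1$ this means the quotient of each wreath factor by its centralizer is simply $S^{m_r([c])}(V^{\langle c\rangle}/C_G(c))$; applying Theorem~\ref{Mac-EquiCheah} factor-by-factor and collecting the age shifts then yields the single-index Macdonald product on the right-hand side of (\ref{Mac-main-eqn}) for $k = 1$.

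For the inductive step, assume (\ref{Mac-main-eqn}) for $k - 1$, and rewrite $e^{(k)}_{(V^n, G_n, \varphi^{(n)})}(u,v)$ via its recursive definition as a sum over conjugacy classes of $G_n$ of order-$(k-1)$ invariants of the associated fixed-locus triples. Using the cycle decomposition above, together with the multiplicativity of $e^{(k-1)}_\bullet$ over disjoint products of triples, the contribution of a class $\lambda$ factors as a product over $([c], r)$ of $e^{(k-1)}$ evaluated on the $m_r([c])$-th wreath power of the auxiliary triple $(V^{\langle c\rangle}, C_G(c) \times \ZZ/r\ZZ, \varphi)$. Summing over $m_r([c]) \geq 0$ with weight $T^{r \cdot m_r([c])}\cdot (uv)^{m_r([c])(r-1)d/2}$ and invoking the inductive hypothesis with the generating variable $(uv)^{(r-1)d/2}T^r$ in place of $T$ expresses the full generating series as an infinite product of $(k-1)$-fold Macdonald factors. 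Relabelling the outer cycle length $r$ as $r_k$ and assembling over $[c]$, $\beta$, and $r$ then rebuilds the $k$-fold product of (\ref{Mac-main-eqn}); the base of the power reassembles, via the recursive definition of $e^{(k)}_{(V,G,\varphi)}$ and the triviality of the cyclic factor on the relevant fixed loci, into the single invariant $e^{(k)}_{(V,G,\varphi)}(u,v)$.

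The main obstacle is the precise bookkeeping of the combinatorial exponent $r_2 r_3^2 \cdots r_k^{k-1}$ and of the half-integer age factor $(uv)^{(r_1 \cdots r_k)d/2}$. The exponent arises because each level of the iteration introduces an additional $\ZZ/r_i\ZZ$ factor into the centralizer, which through the recursive structure of $e^{(k)}_\bullet$ multiplies the effective exponent by $r_i$ once at each subsequent level, producing $r_i^{i-1}$ by the outermost level. The age factor arises from the inductive combination of the cycle-age shifts $(r_i-1)d/2$ with the $(uv)$-exponents $r_i d/2$ already present in the inductive hypothesis; telescoping these produces the desired $(r_1 \cdots r_k)d/2$. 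Both identities are most cleanly verified in the power-structure language of Section~\ref{lambda}, where the identity $(1-T)^{-\sum a_{\underline{k}}\underline{z}^{\underline{k}}} = \prod_{\underline{k}}\lambda_{a_{\underline{k}}}(\underline{z}^{\underline{k}}T)$ reduces the multiplicative claim to a manageable set of $\lambda$-ring identities.
\end{proof*}
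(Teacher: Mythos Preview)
Your route is genuinely different from the paper's. The paper does not carry out any wreath-product combinatorics or induction on $k$; it simply observes that $e^{(k)}_{(V,G,\varphi)}(u,v)$ is the image of the equivariant generalized order-$k$ Euler characteristic of \cite{GLM6} under the map $K_0^{\ZZ}(\Var)[\LLL^{1/m}]\to R_f(\ZZ)[u,v][(uv)^{1/m}]$, notes that this map is a $\lambda$-ring homomorphism by Theorem~\ref{Mac-EquiCheah}, and then pushes forward the Macdonald-type identity already proved at the Grothendieck-ring level in \cite[Theorem~2]{GLM6}. The cycle-index parametrization of conjugacy classes in $G\wr S_n$, the factorization of fixed loci and centralizers, and the inductive bookkeeping you sketch are precisely the contents of the argument in \cite{GLM5,GLM6}; here the paper only supplies the specialization step. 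Your argument is thus more self-contained, while the paper's is a two-line reduction to an external source.

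One step in your outline is not as innocent as you make it sound. When you invoke the inductive hypothesis for the auxiliary triple $(V^{\langle c\rangle},\,C_G(c)\times\ZZ/r\ZZ,\,\varphi)$, the dimension entering the $(k-1)$-fold product on the right-hand side is $d'=\dim V^{\langle c\rangle}$, not $d$; and the outer age contributed by an $r$-cycle with product $c$ is $(r-1)d/2+{\rm age}_x(c)$, with ${\rm age}_x(c)$ computed in $T_xV$, not in $T_xV^{\langle c\rangle}$. Substituting $T'=(uv)^{(r-1)d/2+\beta'}T^r$ into the inductive formula therefore produces $(uv)$-exponents of the form $(r_1\cdots r_{k-1})\bigl(d'/2+(r-1)d/2+\beta'\bigr)$, and these collapse to the uniform $(r_1\cdots r_k)d/2$ only if $d'+2\beta'=d$ on every stratum, which is false in general. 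So the ``telescoping'' you appeal to does not follow from the identities you cite, and a correct direct proof has to keep the varying dimensions and the ages ${\rm age}_x(c)$ separate throughout the recursion rather than absorbing them at each stage. This is exactly the bookkeeping that \cite{GLM6} performs once in $K_0^{\ZZ}(\Var)[\LLL^{1/m}]$ and that the paper then transports via the $\lambda$-homomorphism.
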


\begin{proof}
 In \cite{GLM6}, there were defined equivariant generalized higher order Euler characteristics
 of a complex quasi-projective manifold with commuting actions of two finite groups $G_O$ and $G_B$
 as elements of the extension $K_0^{G_B}(\Var)[\LLL^{1/m}]$ of the Grothendieck ring $K_0^{G_B}(\Var)$
 of complex quasi-projective $G_B$-varieties ($\LLL$ is the class of the complex affine line with
 the trivial action) and there were given Macdonald type equations for them: \cite[Theorem~2]{GLM6}.
 One can see that these definitions and the Macdonald type equations can be applied when instead of an
 action of a finite group $G_B$ one has a finite order action of the cyclic group $\ZZ$.
 The equivariant order $k$ Hodge-Deligne polynomial is the image of the equivariant generalized
 Euler characteristic of order $k$ under the map $K_0^{\ZZ}(\Var)[\LLL^{1/m}]\to R_f(\ZZ)[u,v][(uv)^{1/m}]$.
 Since this map is a $\lambda$-ring homomorphism (Theorem~\ref{equiCheah}), the Macdonald type equation
 for the equivariant generalized Euler characteristic of order $k$ implies~(\ref{Mac-main-eqn}).
 \end{proof}
 
 \begin{corollary}
 In the situation described above one has
\begin{eqnarray*}
&1&+\sum_{n=1}^{\infty} {\rm hsp}_2^{(k)}{(V^n, G_n,\varphi^{(n)})}\cdot T^n\\
&=&\left(\prod\limits_{r_1, \ldots,r_k\geq 1}\left(1-\{(0,(r_1r_2\cdots r_k)d/2)\}\cdot T^{r_1r_2\cdots r_k}\right)^{r_2r_3^2\cdots r_k^{k-1}}\right)
^{-{\rm hsp}_2^{(k)}{(V, G,\varphi)}}\,,
\end{eqnarray*}
\begin{eqnarray*}
&1&+\sum_{n=1}^{\infty} {\rm hsp}_3^{(k)}{(V^n, G_n,\varphi^{(n)})}\cdot T^n\\
&=&\left(\prod\limits_{r_1, \ldots,r_k\geq 1}\left(1-\{(0,(r_1r_2\cdots r_k)d/2,(r_1r_2\cdots r_k)d/2)\}\cdot T^{r_1r_2\cdots r_k}\right)^{r_2r_3^2\cdots r_k^{k-1}}\right)
^{-{\rm hsp}_3^{(k)}{(V, G,\varphi)}}\,,
\end{eqnarray*}
where the RHSs are understood in terms of the power structures over the group rings
$\ZZ[(\QQ/\ZZ)\times\QQ]$ and $\ZZ[(\QQ/\ZZ)\times\QQ\times\QQ]$ respectively.
\end{corollary}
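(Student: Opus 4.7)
The plan is to deduce both identities directly from Theorem~\ref{Mac-main} by pushing the Macdonald equation forward along natural $\lambda$-ring homomorphisms from the equivariant Hodge--Deligne ring to the appropriate group rings.

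First I would identify the reduction maps. The assignment $\omega_{\be[\alpha]}\mapsto\{\alpha\}$ identifies $R_f(\ZZ)$ with the group ring $\ZZ[\QQ/\ZZ]$, and under this identification the $\lambda$-structure given by symmetric powers of representations matches the group-ring $\lambda$-structure of Section~\ref{lambda}: both send a single element~$g$ to $1/(1-gT)$. Treating $u$, $v$ and $(uv)^{1/m}$ as further semigroup generators of ``rank one'' type, one may regard $R_f(\ZZ)[u,v][(uv)^{1/m}]$ as the semigroup ring of the monoid $(\QQ/\ZZ)\times S$, where $S=\{(a,b)\in\QQ_{\ge 0}^2:a-b\in\ZZ\}$, carrying the semigroup-ring $\lambda$-structure of Section~\ref{lambda}. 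I would then define
$$
\pi_3(\omega_{\be[\alpha]}u^av^b)=\{(\alpha,a,b)\},\qquad
\pi_2(\omega_{\be[\alpha]}u^av^b)=\{(\alpha,a)\},
$$
induced by the obvious projections of semigroups.

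Next I would check that $\pi_2$ and $\pi_3$ are $\lambda$-ring (hence power-structure preserving) homomorphisms and that they intertwine the equivariant order~$k$ Hodge--Deligne polynomial with the corresponding higher order spectrum. The first point follows because a homomorphism of monoids induces a $\lambda$-ring homomorphism of the associated semigroup rings: the $\lambda$-operations are determined on single semigroup elements by $\lambda_g(T)=1/(1-gT)$, the maps $\pi_{2,3}$ manifestly respect this, and the universal identities then extend compatibility additively and multiplicatively. For the second point I would argue by induction on~$k$: for $k=0$ the identities $\pi_3(e_{(V/G,\widehat\varphi)}(u,v))={\rm hsp}_3(V/G,\widehat\varphi)$ and its pair analogue are tautological from the definitions; the inductive step compares the recursions for $e^{(k)}$ and ${\rm hsp}^{(k)}_{2,3}$ using the crucial equalities $\pi_3((uv)^\beta)=\{(0,\beta,\beta)\}$ and $\pi_2((uv)^\beta)=\{(0,\beta)\}$.

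With these ingredients in hand, the last step is to apply $\pi_2$ and $\pi_3$ term by term to both sides of~(\ref{Mac-main-eqn}). The LHS becomes the generating series of the spectra of the wreath-product powers $(V^n,G_n,\varphi^{(n)})$. On the RHS, preservation of the power structure permits pushing $\pi_{2,3}$ inside, replacing the base $(uv)^{(r_1\cdots r_k)d/2}$ and the exponent $-e^{(k)}_{(V,G,\varphi)}(u,v)$ by their respective images, which is precisely the shape of the two formulas in the corollary. The principal conceptual step—really the only nontrivial one—is confirming that the natural $\lambda$-structure used in Section~\ref{higher} on $R_f(\ZZ)[u,v][(uv)^{1/m}]$ coincides with the semigroup-ring $\lambda$-structure above, so that $\pi_2$ and $\pi_3$ are indeed $\lambda$-ring homomorphisms; once this compatibility is secured, the corollary follows by applying a $\lambda$-ring homomorphism to the Macdonald identity of Theorem~\ref{Mac-main}.
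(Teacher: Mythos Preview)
Your approach is correct and is exactly the intended deduction: the paper states the result as an immediate corollary of Theorem~\ref{Mac-main} without further proof, the implicit argument being precisely to push~(\ref{Mac-main-eqn}) forward along the natural $\lambda$-ring homomorphisms $R_f(\ZZ)[u,v][(uv)^{1/m}]\to\ZZ[(\QQ/\ZZ)\times\QQ]$ and $R_f(\ZZ)[u,v][(uv)^{1/m}]\to\ZZ[(\QQ/\ZZ)\times\QQ\times\QQ]$ that you describe. Your write-up simply makes explicit the compatibilities the paper leaves tacit.
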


\bigskip
\noindent Leibniz Universit\"{a}t Hannover, Institut f\"{u}r Algebraische Geometrie,\\
Postfach 6009, D-30060 Hannover, Germany \\
E-mail: ebeling@math.uni-hannover.de\\

\medskip
\noindent Lomonosov Moscow State University, Faculty of Mechanics and Mathematics,\\
GSP-1, Leninskie Gory 1, Moscow, 119991, Russia\\
E-mail: sabir@mccme.ru

\end{document}